\documentclass[11pt]{article}
\usepackage{amsfonts,amsmath,amssymb}
\usepackage{mathrsfs,mathtools,url}
\usepackage[T1]{fontenc}
\usepackage{graphicx,tikz,color}
\usepackage{cite,url}

\newtheorem{theorem}{Theorem}[section]

\newtheorem{lemma}[theorem]{Lemma}

\newcommand{\proof}{\noindent{\bf Proof.\ }}
\newcommand{\qed}{\hfill $\square$ \bigskip}

\newcommand{\dstart}{\gamma_{\rm g}}
\newcommand{\sstart}{\gamma_{\rm g}^\prime}
\newcommand{\tdstart}{\gamma_{{\rm{tg}}}}
\newcommand{\tsstart}{\gamma_{{\rm{tg}}}^\prime}
\newcommand{\gammat}{\gamma_{{\rm t}}}
\newcommand{\grundy}{\gamma_{{\rm{gr}}}}
\newcommand{\grundyt}{\gamma_{{\rm{gr}}}^{\rm t}}

\begin{document}

\title{Domination game and total domination game played on Sierpi\'{n}ski graphs}
\date{}

\author{
Tanja Dravec $^{a,b,}$\thanks{Email: \texttt{tanja.dravec@um.si}}
\and Daniel P. Johnston $^{c,}$\thanks{Email: \texttt{daniel.johnston@trincoll.edu}}
\and Sandi Klav\v{z}ar $^{a,b,d,}$\thanks{Email: \texttt{sandi.klavzar@fmf.uni-lj.si}}
}
\maketitle

\begin{center}
$^a$ Faculty of Natural Sciences and Mathematics, University of Maribor, Slovenia \\
\medskip
$^b$ Institute of Mathematics, Physics and Mechanics, Ljubljana, Slovenia \\
\medskip
$^c$ Department of Mathematics, Trinity College, Hartford, CT, USA\\
\medskip
$^d$ Faculty of Mathematics and Physics, University of Ljubljana, Slovenia
\end{center}
\maketitle

\begin{abstract}
The game domination numbers $\gamma_{\rm g}$ and $\gamma_{\rm g}^\prime$, and the game total domination numbers $\gamma_{{\rm{tg}}}$ and $\gamma_{{\rm{tg}}}^\prime$ are investigated on Sierpi\'{n}ski graphs $S_p^n$. For the domination game, the trivial bounds arising from the known domination number and the Grundy domination number of $S_p^n$ are significantly improved by proving that $(2p-3)p^{n-2}\leq \gamma_{\rm g}(S_p^n), \gamma_{\rm g}^\prime(S_p^n)
\leq (2p-2)p^{n-2}$. For the total domination game the bounds $\gamma_{{\rm{tg}}}(S_p^n), \gamma_{{\rm{tg}}}^\prime(S_p^n) \geq (2p-2)p^{n-2}$ are established. 
\end{abstract}

\noindent
{\bf Keywords:} domination game; total domination game; Sierpi\'{n}ski graph; Grundy domination number; total Grundy domination number

\medskip\noindent
{\bf AMS Subj.\ Class.\ (2020)}: 05C57, 05C69

\section{Introduction}

Since its introduction in 2010~\cite{bresar-2010}, the domination game has met with a very wide response and has been studied in great detail so far. The state of the field up to 2021 was summarized in the book~\cite{book-2021}. The domination game research is still very topical, in particular, Versteegen~\cite{versteegen-2024} proved the $3/5$-Conjecture for the domination game posed in~\cite{kinnersley-2013}, while Portier and Versteegen~\cite{portier-2025} proved the $3/4$-Conjecture for the total domination game posed in~\cite{henning-2017}. For selected additional recent achievements see~\cite{brito-2026, bujtas-2022, forcan-2022, fuhrer-2026+, irsic-2025, james-2023, wor-2024}. 

Sierpi\'{n}ski graphs~\cite{km-1997} were introduced in connection with various problems and applications, including the Tower of Hanoi game, physics, interconnection networks, and topology; see the extensive survey~\cite{kmz-2017} on the developments up to 2017, and the following selected subsequent references~\cite{att-2026, hs-2022, jos-2025, mcs-2023, liu-2021, yang-2025}. A characteristic feature of Sierpi\'{n}ski graphs is their fractal-like structure, which makes them a natural discrete analogue of fractals. 

From the algorithmic perspective, domination is a fundamental graph-theoretic problem that is computationally hard in general~\cite{GJ-79}, but the domination number can be determined efficiently for several important graph classes. In particular, for Sierpi\'{n}ski graphs $S_p^n$, exact formulas for the domination number have been established in~\cite[Theorem 3.8]{kmp-2002}: if $p\ge 1$ and $n \ge 0$, then
\begin{equation}
\label{eq:gamma}
\gamma (S_p^n) = \frac{p^n + p^{[n\ {\rm even}]}}{p+1}\,,
\end{equation}
where $[n\ {\rm even}]$ denotes the {\em Iverson bracket} which is, for a given statement $\mathfrak{S}$, defined as
$[\mathfrak{S}] = 1$ if $\mathfrak{S}$ is true, and 
$[\mathfrak{S}] = 0$ otherwise. Moreover, the Grundy domination number $\gamma_{\mathrm{gr}}$ of Sierpi\'{n}ski graphs has been determined in~\cite[Theorem 5]{bgk-2016} as follows: if $n, p\ge 1$, then 
\begin{equation}
\label{eq:grundy}
\gamma_{\mathrm{gr}}(S_p^n)=p^{n-1}+\frac{p(p^{n-1}-1)}{2}\,.
\end{equation}
Motivated by~\eqref{eq:gamma} and~\eqref{eq:grundy}, we investigate in this paper the domination game played on $S_p^n$. Namely, formula~\eqref{eq:gamma} can be considered as the game domination number for the case when Dominator is the solitary player, while formula~\eqref{eq:grundy} is the solution for the case when Staller is the only player. Hence it is natural to ask what happens when Dominator and Staller play simultaneously. Having in mind the extensive investigation of the domination game as well as of Sierpi\'{n}ski graphs it is rather surprising that this problem has not yet been studied.  

In the next section we formally introduce the concepts investigated and recall earlier results which we need. In Section~\ref{sec:dom-game} we study the domination game on Sierpi\'{n}ski graphs. In particular, we improve the trivial lower bound given by the domination number and the trivial upper bound given by twice the domination number minus one, or equivalently, by the Grundy domination number. Our main result in this section establishes that, for $p\geq 3$ and $n \geq 2$, both $\dstart(S_p^n)$ and $\sstart(S_p^n)$ are bounded from below by 
$(2p-3)p^{n-2}$ and from above by $(2p-2)p^{n-2}.$ In Section~\ref{sec:total-dom-game} we then consider the total domination game and prove that if $p\geq 3$, then 
$\tdstart(S_p^n) \geq (2p-2)p^{n-2}$ and
$\tsstart(S_p^n) \geq (2p-2)p^{n-2}$.

\section{Preliminaries}

For a positive integer $p$ we will use the convention $[p] = \{1,\dots, p\}$. Let $G = (V(G), E(G))$ be a graph. A set $D \subseteq V(G)$ is {\em dominating} if each vertex from $V(G)\setminus D$ has a neighbour in $D$ and is {\em total dominating} if each vertex from $V(G)$ has a neighbour in $D$. The {\em domination number} $\gamma(G)$ of $G$  is the minimum cardinality of a dominating set of $G$ and the {\em total domination number} $\gammat(G)$ of $G$  is the minimum cardinality of a total dominating set of $G$. A {\em legal dominating sequence} in $G$ is a sequence of vertices such that each vertex from the sequence dominates at least one vertex of $G$ which is not dominated by the preceding elements in the sequence. The maximum length of a legal dominating sequence in $G$ is the {\em Grundy domination number} of $G$ denoted by $\grundy(G)$. This invariant was studied for the first time in~\cite{bgmrr-2014}. The {\em Grundy total domination number} $\grundyt(G)$ is defined analogously for total domination sequences and was introduced in~\cite{bresar-2016}.

The {\em domination game} is played on a graph $G$ by Dominator and Staller, which alternately choose vertices of $G$ such that each newly selected vertex dominates at least one vertex not previously dominated. The game is over when there is no vertex available to play. At that time the set of vertices selected, say $X$, forms a dominating set of $G$. Dominator's aim is to finish the game such that $|X|$ is as small as possible, Staller's goal is just the opposite. A {\em D-game} is the game in which Dominator has the first move and an {\em S-game} is the game in which Staller has the first move. The {\em game domination number}, $\dstart(G)$, of $G$ is the number of moves in a D-game when both players play optimally. The {\em Staller start game domination number}, $\sstart(G)$, of $G$ is defined analogously for the S-game. The {\em total domination game} is played analogously as the domination game, except that each newly selected vertex totally dominates at least one vertex not previously totally dominated. The corresponding invariants are denoted by $\tdstart(G)$ and $\tsstart(G)$.

For a set $A \subseteq V(G)$, a \emph{partially dominated graph}, $G|A$ is a graph in which vertices from $A$ are considered already dominated. If $G|A$ is a partially dominated graph, then $\dstart(G|A)$ and $\sstart(G|A)$ denote the optimal number of moves remaining in D-game and S-game, respectively. Analogously $\tdstart(G|A)$ and $\tsstart(G|A)$ are defined.

Next, we recall several known results on the game domination number and the Staller-start game domination number that will be used throughout the paper.

\begin{theorem} {\rm \cite{bresar-2010}}
\label{thm:gamma}
    If $G$ is a graph, then $\dstart(G) \leq 2\gamma(G)-1$.
\end{theorem}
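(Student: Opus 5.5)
The plan is a strategy-stealing argument: Dominator imagines a fixed minimum dominating set and plays from it whenever he can. Let $D$ be a minimum dominating set of $G$, so $|D| = \gamma(G)$. In the D-game, whenever it is Dominator's turn, he selects any vertex of $D$ that is still a legal move, i.e.\ a vertex $d\in D$ such that the closed neighbourhood $N[d]$ still contains an undominated vertex. If no vertex of $D$ is legal, then every vertex of $G$ is already dominated, because $D$ dominates $G$. In that case the game has already ended, so Dominator always has a move from $D$ available as long as the game is not over.

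The counting step is next. Each Dominator move selects a vertex of $D$, and the same vertex can never be played twice. A vertex of $D$ that Staller happens to play is also never selected again. Hence Dominator makes at most $|D|$ moves. More precisely, we claim the game ends no later than the moment at which every vertex of $D$ has been played or has become illegal. Once all of $D$ is "exhausted" in this sense, every vertex of $G$ is dominated, so the game is over. Consequently Dominator plays at most $\gamma(G)$ vertices in total.

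Finally we convert the bound on Dominator's moves into a bound on the total length. Since Dominator starts the D-game and the players alternate, if Dominator makes $k$ moves then Staller makes at most $k$ moves, and exactly $k-1$ moves if the game ends on Dominator's move. The game cannot end with Staller's $k$-th move being followed by a needed $(k+1)$-st Dominator move, as that would contradict $k \le \gamma(G)$. This gives at most $2\gamma(G)$ moves, and we must sharpen it to $2\gamma(G)-1$. Suppose Dominator has made exactly $\gamma(G)$ moves, all from $D$. Then after his last move every vertex of $D$ has been played, so $G$ is fully dominated and the game ends on that move, leaving Staller no $\gamma(G)$-th move. Otherwise Dominator made at most $\gamma(G)-1$ moves, and Staller made at most as many, giving at most $2\gamma(G)-2$ moves in total.

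The main obstacle is making this last step rigorous. Staller may play vertices of $D$ herself, which would make Dominator "skip" those vertices, so the bookkeeping must show that the game still ends by the time the vertices of $D$ run out. We handle this by the invariant that after each of Dominator's moves, the set of vertices of $D$ not yet played, together with the played vertices, dominates everything. Because Dominator only ever plays from $D$, the game ends no later than his $\gamma(G)$-th move, which gives $\dstart(G)\le 2\gamma(G)-1$.
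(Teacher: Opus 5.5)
Your proof is correct and is the standard argument from the Bre\v{s}ar--Klav\v{z}ar--Rall paper that this paper cites without proof: Dominator always plays a legal vertex of a fixed minimum dominating set $D$, so his moves are distinct vertices of $D$, and if he ever makes $\gamma(G)$ moves then all of $D$ has been played, the game ends on his move, and Staller has made only $\gamma(G)-1$ moves. The ``obstacle'' in your last paragraph is not really an obstacle, because Staller playing vertices of $D$ can only lower the number of Dominator's moves, and the count in your third paragraph already covers that case.
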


\begin{lemma}{\rm \cite[Lemma~2.1]{kinnersley-2013}}
  \label{lem:continuation}
  {\rm (Continuation Principle)}
  Let $G$ be a graph, and let $A,B\subseteq V(G)$. If $B\subseteq A$, then
  $\dstart(G|A)\le \dstart(G|B)$ and $\sstart(G|A)\le \sstart(G|B)$.
\end{lemma}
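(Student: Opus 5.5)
The plan is to prove both inequalities at once with an \emph{imagination strategy}. Dominator plays the real game on $G|A$ and, in his head, a second game on $G|B$ in which he follows an optimal strategy. We will show that the real game lasts no longer than the imagined one. Since Dominator plays optimally in the imagined game, whatever Staller does there, the imagined game lasts at most $\dstart(G|B)$ moves (resp.\ $\sstart(G|B)$ for the S-game). This gives a strategy for Dominator on $G|A$ guaranteeing at most that many moves, hence $\dstart(G|A)\le \dstart(G|B)$ and $\sstart(G|A)\le \sstart(G|B)$. The argument is identical for the D-game and the S-game, since only the order of the players changes.

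The key invariant is as follows. After each round, let $R$ be the set of vertices dominated in the real game, namely $A$ together with the closed neighbourhoods of the real moves. Let $I$ be the analogous set in the imagined game, namely $B$ together with the closed neighbourhoods of the imagined moves. We maintain $I\subseteq R$. It holds initially because $B\subseteq A$. The moves are transferred between the games as follows.

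\textbf{Staller's move.} Suppose Staller plays a legal vertex $v$ in the real game. Then $N[v]$ contains a vertex outside $R$, hence outside $I$, so $v$ is also legal in the imagined game. Dominator copies $v$ there, and the invariant is preserved, since both sets grow by $N[v]$.

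\textbf{Dominator's move.} Let $u$ be Dominator's optimal response in the imagined game. If $u$ is legal in the real game, he plays $u$ in both games. If not, then $N[u]\subseteq R$, and he plays $u$ only in the imagined game; in the real game he plays an arbitrary legal vertex, provided the real game is not already over. In either case $I\cup N[u]\subseteq R$ afterwards, so the invariant survives.

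To finish, note that every real move is paired with exactly one imagined move made at the same time. Hence the number of real moves never exceeds the number of imagined moves. Moreover, once the imagined game ends we have $I=V(G)$, so $R=V(G)$ and the real game has ended as well. Therefore the real game ends no later than the imagined game, and its length is bounded by the length of the imagined game, as required.

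The only point needing care is the bookkeeping when Dominator's imagined move $u$ is illegal in the real game, or when the real game has already ended. One must check that the invariant uses only the inclusion $N[u]\subseteq R$. One must also check that Staller never gets a real move lacking an imagined counterpart, which is exactly what the legality transfer in the Staller case guarantees.
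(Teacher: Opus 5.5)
Your proof is correct and is essentially the standard imagination-strategy argument of Kinnersley, West and Zamani. The paper only cites that argument and does not reprove it. Your invariant (the set dominated in the imagined game on $G|B$ stays inside the set dominated in the real game on $G|A$), together with copying Staller's moves and replacing an illegal imagined Dominator move by an arbitrary legal one, is exactly their proof. One point is worth stating explicitly: if the real game ends first, the imagined game can be completed with Dominator still following his optimal strategy, so the number of imagined moves already made is at most $\dstart(G|B)$ (resp.\ $\sstart(G|B)$).
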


Using the Continuation Principle, the following important result can be deduced. 

\begin{theorem} {\rm \cite{bresar-2010, kinnersley-2013}} 
\label{thm:dif1}
    If $G$ is a graph, then $|\dstart(G)-\sstart(G)| \leq 1$.
\end{theorem}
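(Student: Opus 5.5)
The plan is to derive Theorem~\ref{thm:dif1} from the Continuation Principle (Lemma~\ref{lem:continuation}) by a strategy-stealing (imagination) argument. I will prove the stronger statement for partially dominated graphs: for every $A \subseteq V(G)$,
\[
\dstart(G|A) \le \sstart(G|A) + 1 \quad\text{and}\quad \sstart(G|A) \le \dstart(G|A) + 1 .
\]
Taking $A = \emptyset$ then gives the theorem. The natural organisation is induction on the number of undominated vertices of $G|A$, treating one inequality at a time.

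\textbf{First inequality.} I will show $\dstart(G|A) \le \sstart(G|A)+1$. Suppose Dominator opens the D-game on $G|A$ with some legal vertex $v$, and let $A' = A \cup N[v]$. After this move the game continues as an S-game on $G|A'$. Since $A \subseteq A'$, Lemma~\ref{lem:continuation} gives $\sstart(G|A') \le \sstart(G|A)$. Hence
\[
\dstart(G|A) \le 1 + \sstart(G|A') \le 1 + \sstart(G|A).
\]

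\textbf{Second inequality.} I will show $\sstart(G|A) \le \dstart(G|A)+1$. Suppose Staller opens the S-game on $G|A$ with an optimal vertex $u$, and set $A'' = A \cup N[u]$. The value is then $1 + \dstart(G|A'')$, and again by Lemma~\ref{lem:continuation}, $\dstart(G|A'') \le \dstart(G|A)$. Therefore
\[
\sstart(G|A) = 1 + \dstart(G|A'') \le 1 + \dstart(G|A).
\]
The degenerate case where no legal move exists is trivial, since then both values equal $0$.

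\textbf{Main point of care.} The key step is the correct formulation of the Continuation Principle for both game types. It must apply to the S-game version in the first inequality and to the D-game version in the second. Lemma~\ref{lem:continuation} states exactly this, so the argument reduces to these two one-move comparisons and no induction is actually needed. The only remaining check is the bookkeeping identity itself: after the first move on $G|A$, the remaining game is precisely the game of the opposite type on $G|(A\cup N[x])$, where $x$ is the vertex just played.
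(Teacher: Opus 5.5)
Your proof is correct and is exactly the deduction the paper has in mind when it says the result follows from the Continuation Principle: you expand the first move, getting $\dstart(G|A) \le 1+\sstart(G|A\cup N[v])$ for any legal $v$ and $\sstart(G|A) = 1+\dstart(G|A\cup N[u])$ for Staller's optimal $u$, and then apply Lemma~\ref{lem:continuation} to each. As you yourself observe, the induction you announce at the start is never used, so you can drop that sentence.
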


Let $G$ be a graph and $uv$ be an edge in $G$. Let $G_{uv}|\{u',v'\}$ be  the graph obtained from $G - uv$ by adding two new vertices $u'$ and $v'$, adding the two edges $uu'$ and $vv'$, and declaring that both $u'$ and $v'$ are dominated. For notational convenience, we write $G_{uv}$ rather than $G_{uv}|\{u',v'\}$.

\begin{theorem}
{\rm (Cutting Lemma)\cite[Theorem~3.1]{dorbec_cutting_2019}}
\label{thm:cutting-lemma}
Let $G$ be a graph, and let $A,B \subseteq V(G)$ where $B\subseteq A$. If $uv$ is an edge of $G$, then
$\dstart(G|A) \le \dstart(G_{uv}|B)$ and $\sstart(G|A) \le \sstart(G_{uv}|B)$.
\end{theorem}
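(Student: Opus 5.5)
We prove the Cutting Lemma by an \emph{imagination strategy}. Dominator plays the real game on $G|A$ and, alongside it, an imagined game on $G_{uv}|B$ in which he follows an optimal strategy. We identify $V(G)$ with $V(G_{uv})\setminus\{u',v'\}$. We write $D_r$ and $D_i$ for the sets of dominated vertices in the real and imagined games, respectively, and maintain the invariant
\[
D_i\cap V(G)\subseteq D_r .
\]
It holds at the start because $D_i\cap V(G)=B\subseteq A=D_r$. It is preserved whenever the same vertex $x\in V(G)$ is played in both games, since $N_{G_{uv}}[x]\cap V(G)\subseteq N_G[x]$. If $u'$ is played in the imagined game while $u$ is played in the real game, we have $N_{G_{uv}}[u']\cap V(G)=\{u\}\subseteq N_G[u]$, so the invariant is preserved; the same holds for $v'$ and $v$. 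The plan is to pair every real move with exactly one imagined move, so that both games have the same number of moves until the real game ends.

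\emph{Staller's moves.} Suppose Staller plays $x$ in the real game, and let $w\notin D_r$ be a vertex it newly dominates. By the invariant, $w\notin D_i$. If $w\in N_{G_{uv}}[x]$, then $x$ is legal in the imagined game and Dominator copies it there. Otherwise $w\in N_G[x]\setminus N_{G_{uv}}[x]$, which forces $\{x,w\}=\{u,v\}$; say $x=u$ and $w=v$. Since $v\notin D_i$, the move $v'$ is legal in the imagined game and dominates $v$, so Dominator lets Staller play $v'$ there. Since $N_{G_{uv}}[v']\cap V(G)=\{v\}\subseteq N_G[u]$, the invariant survives. This step, where the deleted edge $uv$ makes Staller's real move look illegal in the imagined game, is the main point needing care. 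The pendant, predominated vertices $u',v'$ exist precisely to absorb it.

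\emph{Dominator's moves.} Let $x$ be Dominator's optimal move in the imagined game. He replaces $x=u'$ by $u$ and $x=v'$ by $v$, and otherwise plays $x$ itself in the real game, provided that move is legal there. If it is not legal and the real game is not yet over, he plays an arbitrary legal real move instead, while $x$ is still played in the imagined game. In every case $D_r$ only grows, and it grows by at least $N_{G_{uv}}[x]\cap V(G)$, because an illegal real replacement means those vertices were already dominated. Hence the invariant persists.

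\emph{Counting.} When the imagined game ends, $D_i=V(G_{uv})$. By the invariant, $D_r=V(G)$, so the real game has ended no later than the imagined one. Since each real move is paired with exactly one imagined move, the number of real moves is at most the length of the imagined game. Dominator plays optimally in the imagined game, and Staller's imagined moves are legal ones, so that length is at most $\dstart(G_{uv}|B)$. Therefore $\dstart(G|A)\le\dstart(G_{uv}|B)$. The same argument with Staller moving first gives $\sstart(G|A)\le\sstart(G_{uv}|B)$.
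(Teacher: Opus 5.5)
The paper gives no proof of this statement; it quotes it from Dorbec, Henning, Klav\v{z}ar and Ko\v{s}mrlj, so there is no in-paper argument to compare against. Your imagination-strategy proof is correct and complete, and it uses the standard technique behind results of this kind, such as the Continuation Principle.

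The key points all hold:
\begin{itemize}
\item The invariant $D_i\cap V(G)\subseteq D_r$ handles $B\subseteq A$ directly, without a separate appeal to the Continuation Principle.
\item Substituting $v'$ for a real move $u$ that is legal only because of $v$ is exactly what the predominated pendant vertices are for.
\item Replacing $u'$ by $u$ (and $v'$ by $v$) covers Dominator's moves.
\item The same invariant shows that the imagined game is still running whenever a real move is due, so Dominator can always consult his optimal imagined strategy.
\end{itemize}
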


If $p \geq 3$ and $n \geq 1$, then the {\em Sierpi\'nski graph} $S_p^n$ has the vertex set $V (S_p^n) = [p]^n$. We will simplify the notation of a vertex $(i_1, \ldots, i_n)\in V(S_p^n)$ to $i_1\cdots i_n$. Vertices $i_1\cdots i_n$ and $j_1 \cdots j_n$ of $S_p^n$ are adjacent if there exists an index $h  \in [n]$, such that (i) $i_t = j_t$ for every $t < h$, (ii) $i_h \neq j_h$, and (iii) $i_t = j_h$ and $j_t = i_h$ for every $t > h$. Define the set of vertices  
$${\rm Ex}(S_p^n) = \{i^n:\ i\in [p]\},$$ 
the elements of which are referred to as the {\em extreme vertices} of $S_p^n$. If $s \in [p]^{n-k}$, where $k \in [n-1]$, then the subgraph of $S_p^n$ induced by the vertices of the form $\{st:\ t \in [p]^k\}$, is isomorphic to $S_p^k$ and denoted by $\underline{s}S_p^{k}$.  For $i\in [p]$, the notation $\underline{i}S_p^{n-1}$ is simplified to $iS_p^{n-1}$ in which case we have $iS_p^{n-1} \cong S_p^{n-1}$.

For $p\ge 3$, let $\Gamma_p$ be the graph obtained from $S_p^2$ by respectively adding a pendant vertex to each of its $p$ extreme vertices. The vertices of $\Gamma_p$ corresponding to the vertices of $S_p^2$ will be labeled using the convention for $S_p^2$, and the vertices attached to the extreme vertices will be denoted by $x_1, \dots, x_p$, where $x_i$ is adjacent to $ii$. Set also $X_p = \{x_1,\dots, x_p\}$. In addition, let $\Gamma_p^-$ be the graph obtained from $\Gamma_p$ by removing the vertex $x_p$ and let $X_p^- = \{x_1,\dots, x_{p-1}\}$. In Figure~\ref{fig:Gamma_4}, the graphs $\Gamma_4$ and $\Gamma_4^-$ are presented.

\begin{figure}[ht!]
\begin{center}
\begin{tikzpicture}[scale=0.9,style=thick,x=1cm,y=1cm]
\def\vr{3pt}
%
%
\begin{scope}[xshift=0cm, yshift=0cm] 
%
\begin{scope}[xshift=0cm, yshift=0cm] 
\coordinate(x1) at (0,0);
\coordinate(x2) at (1,0);
\coordinate(x3) at (1,1);
\coordinate(x4) at (0,1);
\coordinate(xx1) at (-0.7,-0.7);
\coordinate(xx2) at (3.7,-0.7);
\coordinate(xx3) at (3.7,3.7);
\coordinate(xx4) at (-0.7,3.7);
\draw (x1) -- (x2) -- (x3) -- (x4) -- (x1) -- (x3);
\draw (x2) -- (x4);
\draw (0,1) -- (0,2);
\draw (1,3) -- (2,3);
\draw (3,2) -- (3,1);
\draw (1,0) -- (2,0);
\draw (1,1) -- (2,2);
\draw (2,1) -- (1,2);
\draw (x1) -- (xx1);
\draw (3,0) -- (xx2);
\draw (3,3) -- (xx3);
\draw (0,3) -- (xx4);
\draw(x1)[fill=white] circle(\vr);
\draw(x2)[fill=white] circle(\vr);
\draw(x3)[fill=white] circle(\vr);
\draw(x4)[fill=white] circle(\vr);
\draw(xx1)[fill=white] circle(\vr);
\draw(xx2)[fill=white] circle(\vr);
\draw(xx3)[fill=white] circle(\vr);
\draw(xx4)[fill=white] circle(\vr);
\node at (-0.4,2.9) {{\footnotesize $11$}};
\node at (-0.4,0.1) {{\footnotesize $22$}};
\node at (3.4,0.1) {{\footnotesize $33$}};
\node at (3.4,2.9) {{\footnotesize $44$}};
\node at (-1.1,3.7) {$x_1$};
\node at (-1.1,-0.7) {$x_2$};
\node at (4.1,-0.7) {$x_3$};
\node at (4.1,3.7) {$x_4$};

\end{scope}
%
%
\begin{scope}[xshift=2cm, yshift=0cm] 
\coordinate(x1) at (0,0);
\coordinate(x2) at (1,0);
\coordinate(x3) at (1,1);
\coordinate(x4) at (0,1);
\draw (x1) -- (x2) -- (x3) -- (x4) -- (x1) -- (x3);
\draw (x2) -- (x4);
\draw(x1)[fill=white] circle(\vr);
\draw(x2)[fill=white] circle(\vr);
\draw(x3)[fill=white] circle(\vr);
\draw(x4)[fill=white] circle(\vr);
\end{scope}
%
%
\begin{scope}[xshift=0cm, yshift=2cm] 
\coordinate(x1) at (0,0);
\coordinate(x2) at (1,0);
\coordinate(x3) at (1,1);
\coordinate(x4) at (0,1);
\draw (x1) -- (x2) -- (x3) -- (x4) -- (x1) -- (x3);
\draw (x2) -- (x4);
\draw(x1)[fill=white] circle(\vr);
\draw(x2)[fill=white] circle(\vr);
\draw(x3)[fill=white] circle(\vr);
\draw(x4)[fill=white] circle(\vr);
\end{scope}
%
%
\begin{scope}[xshift=2cm, yshift=2cm] 
\coordinate(x1) at (0,0);
\coordinate(x2) at (1,0);
\coordinate(x3) at (1,1);
\coordinate(x4) at (0,1);
\draw (x1) -- (x2) -- (x3) -- (x4) -- (x1) -- (x3);
\draw (x2) -- (x4);
\draw(x1)[fill=white] circle(\vr);
\draw(x2)[fill=white] circle(\vr);
\draw(x3)[fill=white] circle(\vr);
\draw(x4)[fill=white] circle(\vr);
\end{scope}
\end{scope}

%
%
\begin{scope}[xshift=7cm, yshift=0cm] 
%
\begin{scope}[xshift=0cm, yshift=0cm] 
\coordinate(x1) at (0,0);
\coordinate(x2) at (1,0);
\coordinate(x3) at (1,1);
\coordinate(x4) at (0,1);
\coordinate(xx1) at (-0.7,-0.7);
\coordinate(xx2) at (3.7,-0.7);
\coordinate(xx4) at (-0.7,3.7);
\draw (x1) -- (x2) -- (x3) -- (x4) -- (x1) -- (x3);
\draw (x2) -- (x4);
\draw (0,1) -- (0,2);
\draw (1,3) -- (2,3);
\draw (3,2) -- (3,1);
\draw (1,0) -- (2,0);
\draw (1,1) -- (2,2);
\draw (2,1) -- (1,2);
\draw (x1) -- (xx1);
\draw (3,0) -- (xx2);
\draw (0,3) -- (xx4);
\draw(x1)[fill=white] circle(\vr);
\draw(x2)[fill=white] circle(\vr);
\draw(x3)[fill=white] circle(\vr);
\draw(x4)[fill=white] circle(\vr);
\draw(xx1)[fill=white] circle(\vr);
\draw(xx2)[fill=white] circle(\vr);
\draw(xx4)[fill=white] circle(\vr);

\end{scope}
%
%
\begin{scope}[xshift=2cm, yshift=0cm] 
\coordinate(x1) at (0,0);
\coordinate(x2) at (1,0);
\coordinate(x3) at (1,1);
\coordinate(x4) at (0,1);
\draw (x1) -- (x2) -- (x3) -- (x4) -- (x1) -- (x3);
\draw (x2) -- (x4);
\draw(x1)[fill=white] circle(\vr);
\draw(x2)[fill=white] circle(\vr);
\draw(x3)[fill=white] circle(\vr);
\draw(x4)[fill=white] circle(\vr);
\end{scope}
%
%
\begin{scope}[xshift=0cm, yshift=2cm] 
\coordinate(x1) at (0,0);
\coordinate(x2) at (1,0);
\coordinate(x3) at (1,1);
\coordinate(x4) at (0,1);
\draw (x1) -- (x2) -- (x3) -- (x4) -- (x1) -- (x3);
\draw (x2) -- (x4);
\draw(x1)[fill=white] circle(\vr);
\draw(x2)[fill=white] circle(\vr);
\draw(x3)[fill=white] circle(\vr);
\draw(x4)[fill=white] circle(\vr);
\end{scope}
%
%
\begin{scope}[xshift=2cm, yshift=2cm] 
\coordinate(x1) at (0,0);
\coordinate(x2) at (1,0);
\coordinate(x3) at (1,1);
\coordinate(x4) at (0,1);
\draw (x1) -- (x2) -- (x3) -- (x4) -- (x1) -- (x3);
\draw (x2) -- (x4);
\draw(x1)[fill=white] circle(\vr);
\draw(x2)[fill=white] circle(\vr);
\draw(x3)[fill=white] circle(\vr);
\draw(x4)[fill=white] circle(\vr);
\end{scope}
\end{scope}

\end{tikzpicture}
\caption{Graphs $\Gamma_4$ (left) and $\Gamma_4^-$ (right).}
\label{fig:Gamma_4}
\end{center}
\end{figure}

\section{Domination game}
\label{sec:dom-game}

In this section we study the (Staller start) game domination number of Sierpi\'{n}ski graphs. First, combining~\eqref{eq:gamma} with Theorem~\ref{thm:gamma} we get 
\begin{equation}
\label{eq:1/2}    
\dstart (S_p^n) \le 2 \left(\frac{p^n + p^{[n\ {\rm even}]}}{p+1}\right)  - 1\,.
\end{equation}
Since Sierpi\'{n}ski graphs are traceable (in fact, hamiltonian), this bound confirms Rall's conjecture for the case of Sierpi\'{n}ski graphs asserting that if $G$ is a traceable graph, then $\dstart(G) \le \lceil |V(G)|/2\rceil$, see~\cite[Conjecture 2.41]{book-2021}. Moreover, it also confirms the stronger 1/2-conjecture~\cite[Conjecture 1.2]{bujtas-2022} asserting that as soon as $\delta(G)\ge 2$, then $\dstart(G) \le \lceil |V(G)|/2 \rceil$.

In this section we first significantly improve~\eqref{eq:1/2} by showing that the (Staller start) game domination number of $S_p^n$ is at most $\frac{2(p-1)}{p^2}|V(S_p^n)|$. This implies that, for every fixed $p$, the game domination number of $S_p^n$ is bounded above by a constant fraction of its order, namely $\frac{2(p-1)}{p^2}.$ Next we improve also the lower bound by showing that the (Staller start) game domination number is at least $(2p-3)p^{n-2}$, equivalently $\frac{2p-3}{p^2}$ of its order.

\begin{lemma}
\label{lem:X_p}
    If $p\ge 3$, then the following hold.
    \begin{enumerate}
    \item[(i)] $\dstart(\Gamma_p|X_p) = \sstart(\Gamma_p|X_p) = 2p-2$.
    \item[(ii)] $\dstart(\Gamma_p^-|X_p^-) = \sstart(\Gamma_p^-|X_p^-) = 2p-2$.
    \end{enumerate}
    Moreover, if in each of the cases Staller decides to skip a move, the corresponding game ends in at most $2p-2$ moves.
\end{lemma}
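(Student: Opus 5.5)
The plan is to prove each equality by two matching strategies: an upper-bound strategy for Dominator and a lower-bound strategy for Staller, each working in both the D-game and the S-game. By Theorem~\ref{thm:dif1} and the Continuation Principle (Lemma~\ref{lem:continuation}), it is enough to show two things. First, Dominator can finish the S-game in at most $2p-2$ moves, which also covers the D-game, since Dominator can pretend Staller opened with an arbitrary move. Second, Staller can force at least $2p-2$ moves in the D-game. I would organise the bookkeeping around the $p$ cliques $iS_p^1\cong K_p$ of $S_p^2$. The only undominated vertices at the start are the $p^2$ vertices of $S_p^2$. A vertex $ij$ with $j\ne i$ is dominated either from its own clique or through the bridge edge $ij\,ji$. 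The extreme vertex $ii$ is dominated only from its clique or by playing $x_i$.

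The first step is the structural count. Call clique $i$ \emph{entered} once a vertex $ij$ has been played. Every clique must be entered or have its pendant $x_i$ played, since otherwise $ii$ stays undominated; hence there are at least $p$ moves. I would then refine this with a potential function $\Phi$. $\Phi$ counts the cliques that are not yet entered, plus the cliques that are entered but still contain an undominated vertex not reachable through a bridge from an unentered clique. The aim is to show two things about $\Phi$. A single move decreases $\Phi$ by at most $2$. Staller always has a move decreasing $\Phi$ by at most $1$, essentially playing $x_i$ of an unentered clique, or a vertex $ij$ whose only new neighbour is across a bridge. From an initial value of about $2p-2$ in these units, this yields the lower bound $2p-2$. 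For $\Gamma_p^-$ the missing pendant $x_p$ removes one of Staller's cheap moves. I expect the count to shift by exactly one unit on each side, so the value stays $2p-2$; checking this is part of the case analysis.

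For the upper bound, Dominator's strategy is a pairing or response strategy. On his turn he enters a not-yet-entered clique $i$ by playing a vertex $ij$, chosen so that the bridge neighbour $ji$ is still undominated and lies in another not-yet-entered clique $j$. This move dominates all of clique $i$ and makes progress on clique $j$, so each of his moves decreases $\Phi$ by $2$ whenever possible. After the first one or two rounds, I would show that every Dominator move closes one clique and that at most one "extra" Staller move per closed clique can be legal. This gives at most $p-1$ moves by each player, i.e.\ $2p-2$ in total. Small values such as $p=3$ should be checked by hand, since the pairing argument has the least slack there.

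The main obstacle is the endgame of the Staller lower bound. Once most cliques are entered, the remaining undominated vertices are isolated bridge endpoints, and a single Dominator move might finish two cliques at once. I would handle this by having Staller always keep at least one clique in a state where its last undominated vertex is $ii$, which only a move inside clique $i$ or the move $x_i$ can dominate. This should guarantee that Dominator cannot gain the extra unit at the end. For the final claim about skipped moves, I would observe that Dominator's strategy above only ever uses the current set of dominated vertices, never the parity of the move. A Staller pass therefore just lets Dominator perform his next planned move, which still decreases $\Phi$ by $2$, and the bound $2p-2$ persists.
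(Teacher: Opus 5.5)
You have genuine gaps in both directions.

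\textbf{Reduction and upper bound.} Your opening reduction is not valid. Theorem~\ref{thm:dif1} only gives $\dstart\le\sstart+1$ and $\sstart\ge\dstart-1$, so your two claims would yield just $\dstart\le 2p-1$ and $\sstart\ge 2p-3$. ``Pretending'' that Staller opened with $v$ works against Dominator, because $N[v]$ is not actually dominated in the real game. Each of the four bounds needs its own argument, as in the paper.

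More seriously, your Dominator strategy is underspecified and, read literally, fails. Take $p=4$ and the D-game $12,13,23,24,34,14$. Every Dominator move enters a fresh clique through a bridge into another fresh clique. Every Staller move is legal: they newly dominate only $31$, $42$, $41$. Afterwards $44$ is still undominated, so $7>2p-2$ moves are played. This also refutes ``at most one extra Staller move per closed clique''.

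The missing idea is to aim all bridges at one reserved clique $k$. Dominator plays the $p-1$ vertices $ik$, $i\ne k$ (the paper's $1p,\dots,(p-1)p$), replacing any that has become illegal by an arbitrary legal move. After his $(p-1)$st turn every vertex except possibly $kk$ is dominated, whatever Staller does.
\begin{itemize}
\item In the D-game this turn is move $2p-3$, so at most one more move follows.
\item In the S-game, choose $k$ so that $kk$ is the extreme vertex dominated by Staller's opening move. This is possible because every closed neighbourhood in $\Gamma_p$ contains exactly one extreme vertex. The game is then over by move $2p-2$.
\end{itemize}
The same count gives the claim about Staller skipping a move. Since no pendant is used, it applies verbatim to $\Gamma_p^-$.

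\textbf{Lower bound.} Your potential argument cannot reach $2p-2$ as set up. By your definition $\Phi$ starts at $p$, not ``about $2p-2$''. Decrements of at most $2$ per Dominator move and $1$ per Staller move force only about $\frac{2}{3}\Phi_0$ moves. Worse, $\Phi$ does not track the resource that matters. You list $x_i$ in an unentered clique as a cheap Staller move, and it leaves $\Phi$ unchanged. Yet for $p=3$ the D-game $13,x_3,23$ is over after $3=2p-3$ moves, while $\Phi$ is still $1$, because clique $3$ is fully dominated without being entered.

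The right quantity is the number of dominated extreme vertices $ii$. No move dominates more than one new extreme vertex, which also removes your endgame worry about Dominator finishing two cliques at once. Staller answers a Dominator move in clique $i$ by a vertex $ij$ of the same clique whose only new neighbour is the bridge endpoint $ji$ of a still untouched clique $j$. The paper uses the Continuation Principle to let Dominator enter each clique at most once. Then, during his first $p-2$ moves, two untouched cliques remain, and one of the two candidates differs from the vertex he played. Her replies therefore never dominate an extreme vertex, so after $2p-3$ moves of the D-game at most $p-1$ extreme vertices are dominated. In the S-game she opens with a pendant $x_k$ and argues the same way. This is the paper's proof.
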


\proof
First, consider the D-game played on $\Gamma_p|X_p$. The strategy of Dominator is to play vertices $1p, 2p\ldots , (p-1)p$ in his first $p-1$ moves. If any of these moves is not possible, then he chooses any other legal move. Note that assuming that these $p-1$ vertices have been played, the only not yet dominated vertex in $\Gamma_p|X_p$ is the vertex $pp$. Hence no matter how Staller played, she is forced to dominate this undominated vertex in her $(p-1)^{\rm{st}}$ move. In this way Dominator achieves the goal to finish the game in not more than $2p-2$ moves. Hence $\dstart(\Gamma_p|X_p) \leq 2p-2$.

We next show that Staller has a strategy that at least $2p-2$ vertices will be played on $\Gamma_p|X_p$. Note first that during the game, if Dominator has played some move in $iS_p^1$, then by the Continuation Principle he will not select any other vertex of $iS_p^1$ in his first $p-1$ moves. By the same reason, he will never play on $X_p$ during these moves. Hence we may assume without loss of generality that no vertex of $pS_p^1$ was selected by Dominator by this stage of the game. Furthermore we may assume without loss of generality that in his $i^{\rm{th}}$ move Dominator plays in $iS_p^1$. Now the strategy of Staller in her first $p-2$ moves is to responds in $iS_p^1$ as soon as Dominator selects the vertex in $iS_p^1$. That is, in her $i^{\rm{th}}$ move she always has a legal move in the set $\{ i(i+1), ip\}$. In her $(p-1)^{\rm{th}}$ move Staller selects the vertex $pp$, because it has not yet been dominated. Thus at least $2p-2$ moves are played and hence $\dstart(\Gamma_p|X_p) \geq 2p-2.$ We can conclude that $\dstart(\Gamma_p|X_p)=2p-2$.

Second, consider the S-game played on $\Gamma_p|X_p$. By the Continuation Principle we may without loss of generality assume that the first vertex selected by Staller is $x_p$. After that the strategy of Dominator is as above, that is, to select vertices $1p,\ldots , (p-1)p$. After that all the vertices are dominated which in turn implies that $\sstart(\Gamma_p|X_p) \leq 2p-2$.
We next show that Staller has a strategy that at least $2p-2$ vertices will be played on $\Gamma_p|X_p$. By the Continuation Principle we may without loss of generality assume that Staller plays $x_p$ in her first move. If Dominator has played some move in $iS_p^1$, then by the Continuation Principle he will not select any other vertex of $iS_p^1$ in his first $p-2$ moves. By the same reason, he will never play on $X_p$ during these moves. The strategy of Staller in her next $p-2$ moves is to respond in $iS_p^1$ as soon as Dominator selects the vertex in $iS_p^1$. Thus after these moves, $1+2(p-2)$ vertices have been selected. Moreover, at this stage of the game not all vertices are dominated (there exists $i\in [p]$ such that no vertex from $iS_p^1$ has been played) and hence Dominator is forced to selects at least one more vertex. This already gives us the required conclusion, that is $\sstart(\Gamma_p|X_p) \geq 2p-2$.

Proceeding in a similar lines as above we obtain that $\dstart(\Gamma_p^-|X_p^-) = \sstart(\Gamma_p^-|X_p^-) = 2p-2$ holds for any $p \geq 3$.

For the case of the game where Staller decides to skip a move, Dominator keeps his above described strategy of playing $1p,2p,\ldots , (p-1)p$. Since Staller skipped at least one move at most $2p-3$ vertices have been played so far. Since at that point only the vertex $pp$ is not yet dominated, we can conclude that at most $2p-2$ vertices are played also if Staller decides to skip a move.
\qed
 
\begin{lemma}
\label{lem:S_p^2|Ex}
    If $p\geq 3$, then $\dstart\left(S_p^2|{\rm Ex}(S_p^2)\right) \geq 2p-3$ and $\sstart\left({S_p^2|\rm Ex}(S_p^2)\right) \geq 2p-3$.
 Moreover, if Dominator decides to skip a move, the corresponding game will not end in fewer than $2p-3$ moves.
\end{lemma}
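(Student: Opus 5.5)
The plan is to give an explicit Staller strategy and count moves through the clique structure of $S_p^2$. The graph $S_p^2$ consists of the $p$ cliques $iS_p^1\cong K_p$, $i\in[p]$, together with the bridge edges $ij$–$ji$ for $i\neq j$. In $S_p^2|{\rm Ex}(S_p^2)$ the vertex $ii$ is already dominated, so the undominated vertices of clique $i$ are $ij$ for $j\ne i$. A played vertex $ij$ dominates its whole clique $i$ and, through the bridge, the single vertex $ji$. Call a clique \emph{touched} once some vertex of it has been played.

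\textbf{Step 1 (termination needs $p-1$ touched cliques).} Suppose two distinct cliques $k,l$ are untouched when the game ends. Then $kl$ must be dominated from outside clique $k$, and its only outside neighbour is $lk$. But $lk$ has not been played, since clique $l$ is untouched. This is a contradiction, so at the end of any game at least $p-1$ cliques are touched.

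\textbf{Step 2 (Staller's strategy).} Whenever Dominator plays in some clique $i$, Staller answers in the same clique with a vertex $ik$ such that clique $k$ is still untouched. Such a move is legal: since $k$ is untouched, $ik$ has not been played and $ki$ is still undominated, so $ik$ dominates the new vertex $ki$. Every Staller reply lies in an already touched clique, and clique $k$ stays untouched after it. Hence the number of touched cliques never exceeds the number of Dominator moves, plus one for Staller's opening move in the S-game (which she may play anywhere).

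\textbf{Step 3 (counting).} Take the D-game. After Dominator's $m$-th move with $m\le p-2$, at most $m\le p-2$ cliques are touched. So there are at least two untouched cliques, one of which can serve as $k$ for the reply, and by Step~1 the game is not over. Thus the reply is available and legal for each of Dominator's first $p-2$ moves. By Step~1 Dominator also needs a $(p-1)$-st move, which gives at least $2(p-2)+1=2p-3$ moves.

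Now take the S-game. After Staller's opening move and Dominator's $m$-th move, at most $m+1$ cliques are touched. For $m\le p-3$ this is at most $p-2$, so an untouched $k$ for the reply exists and the game is not over. Dominator therefore makes at least $p-2$ moves, each with a reply as long as some clique is still untouched. When $m=p-2$, at most $p-1$ cliques are touched, so one untouched $k$ remains and the last reply is still available. The total is at least $1+2(p-2)=2p-3$.

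\textbf{Step 4 (Dominator skips).} A skip touches no clique, and Staller then simply plays some vertex $ik$ with $i$ already touched (or anywhere if nothing is touched yet) and $k$ untouched. The bound "touched cliques $\le$ number of genuine Dominator moves plus Staller's unprovoked moves" still holds. Hence reaching $p-1$ touched cliques forces at least as many total moves as before, so the game does not end in fewer than $2p-3$ moves.

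The main obstacle is the bookkeeping in Step~3. One must check that whenever Staller is supposed to reply, an untouched clique $k$ distinct from $i$ still exists. This is exactly why the bound is $2p-3$ rather than $2p-2$: the final Dominator move may leave no untouched clique for a reply.
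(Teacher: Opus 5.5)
Your framework (Staller answers in the clique Dominator just entered, and the game cannot end while two cliques are untouched) is the paper's, and Step~1 is correct. Steps~2--4, however, contain two genuine errors.

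\textbf{First error: the prescribed reply need not exist.} The legality claim in Step~2, ``since $k$ is untouched, $ik$ has not been played'', is false. The vertex $ik$ lies in clique $i$, which may already contain several played vertices. Take $p=4$: Dominator plays $12$, you answer $13$, and Dominator plays $14$ (legal, since it dominates $41$). Now cliques $2,3,4$ are untouched but $12,13,14$ are all played, so your prescribed reply does not exist. This happens at Dominator's second, i.e.\ $(p-2)$-nd, move. Hence both Step~3's claim that replies are always available and your derivation of the invariant fail.

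For the D-game this is repairable by counting per round rather than per move. If Dominator enters a new clique $i$ via $ij$, then $ij$ is the only played vertex of clique $i$. So as long as two cliques are untouched, some $ik$ with $k\neq j$ is a legal reply. If Dominator enters no new clique, Staller may play any legal move, which touches at most one clique. Either way a round raises the number of touched cliques by at most one, and that is all the D-game count needs.

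\textbf{Second error: the S-game opening.} The S-game argument is wrong as written, because the opening cannot be ``anywhere''. For $p=3$, if Staller opens with $12$, Dominator answers $32$ and everything is dominated after two moves.

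The sentence that fails is ``one untouched $k$ remains and the last reply is still available''. With a single untouched clique $l$, Step~1 gives nothing: the game is over as soon as every $kl$ with $k\neq l$ has been played, and Dominator's $(p-2)$-nd move can be such a vertex.

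The paper avoids this by having Staller open with the extreme vertex $11$, which dominates nothing outside clique~$1$. It then uses a vertex $j1$ with clique $j$ untouched as a witness that the game continues. You need that opening, plus an argument that $1l$ is still unplayed when $l$ is the last untouched clique. One way is to count touched cliques among $2,\dots,p$ only. Reaching $p-2$ of them after Dominator's $(p-2)$-nd move forces every round to touch a new clique. But at the first round in which Dominator touches nothing, Staller can answer with some unplayed $1k$, $k$ untouched, which also touches nothing. Hence Dominator never played in clique~$1$.

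The same ``anywhere'' choice breaks Step~4: if Dominator skips at the very start, the $p=3$ example again ends after two moves.
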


\proof
Let us start with the D-game played on $S_p^2|{\rm Ex}(S_p^2)$. Consider the first $p-2$ moves of Dominator. By the Continuation Principle without loss of generality we may assume that these $p-2$ vertices belong to $iS_p^1$, where $i\in [p-2]$. The strategy of Staller is then to reply to each move of Dominator in the same copy of $iS_p^1$ which is possible using the same arguments as in the proof of Lemma~\ref{lem:X_p}. Note that each of these $2p-4$ selected vertices dominates at most $2p-4$ vertices from $V((p-1)S_p^1) \cup V(pS_p^1)$. As a consequence at least two vertices are not yet dominated and hence Dominator is forced to play at least one more move. Thus $\dstart\left(S_p^2|{\rm Ex}(S_p^2)\right) \geq 2p-3$.

In the S-game, Staller follows the same strategy as above. By the Continuation Principle we may without loss of generality assume that Staller selects the vertex $11$ in her first move. After that her strategy is as above, that is whenever Dominator selects a vertex from $iS_p^1$, where $i\neq 1$ by the Continuation Principle, she replies by an arbitrary playable vertex from $iS_p^1$. In this way after the first $1+2(p-3)$ moves, there exists at least two copies of $S_p^1$ in $S_p^2|{\rm Ex}(S_p^2)$ such that none of its vertices were selected, say $iS_p^1$ and $jS_p^1$. Hence after the next move of Dominator, say in $iS_p^1$, the vertex $j1$ is still not dominated. Thus Staller will have another move, which implies that $\sstart\left(S_p^2|{\rm Ex}(S_p^2)\right) \geq 2p-3$.

For the case where Dominator decides to skip a move Staller keeps her above described strategy to reply to each move of Dominator in the same copy of $iS_p^1$. Whenever Dominator decides to skip a move Staller plays (if possible) inside those copies of $iS_p^1$ in which a vertex has already been played. If no such legal move exists, then she will play extreme vertex of not yet played $iS_p^1$. By this strategy she can guarantee that at least $2p-3$ vertices are played. 
\qed


\begin{figure}[ht!]
\begin{center}
\begin{tikzpicture}[scale=0.42,style=thick,x=1cm,y=1cm]
\def\vr{5pt}

\begin{scope}[xshift=10cm, yshift=10cm] 

\coordinate(x1) at (0,0);
\coordinate(x2) at (1,1.732);
\coordinate(x3) at (2,3.464);
\coordinate(x4) at (3,5.196);
\coordinate(x5) at (4,3.464);
\coordinate(x6) at (5,1.732);
\coordinate(x7) at (6,0);
\coordinate(x8) at (4,0);
\coordinate(x9) at (2,0);

\coordinate(x10) at (8,0);
\coordinate(x11) at (9,1.732);
\coordinate(x12) at (10,3.464);
\coordinate(x13) at (11,5.196);
\coordinate(x14) at (12,3.464);
\coordinate(x15) at (13,1.732);
\coordinate(x16) at (14,0);
\coordinate(x17) at (12,0);
\coordinate(x18) at (10,0);

\coordinate(x19) at (4,6.928);
\coordinate(x20) at (5,8.66);
\coordinate(x21) at (6,10.392);
\coordinate(x22) at (7,12.124);
\coordinate(x23) at (8,10.392);
\coordinate(x24) at (9,8.66);
\coordinate(x25) at (10,6.928);
\coordinate(x26) at (8,6.928);
\coordinate(x27) at (6,6.928);

\coordinate(x28) at (16,0);
\coordinate(x29) at (17,1.732);
\coordinate(x30) at (18,3.464);
\coordinate(x31) at (19,5.196);
\coordinate(x32) at (20,3.464);
\coordinate(x33) at (21,1.732);
\coordinate(x34) at (22,0);
\coordinate(x35) at (20,0);
\coordinate(x36) at (18,0);

\coordinate(x37) at (24,0);
\coordinate(x38) at (25,1.732);
\coordinate(x39) at (26,3.464);
\coordinate(x40) at (27,5.196);
\coordinate(x41) at (28,3.464);
\coordinate(x42) at (29,1.732);
\coordinate(x43) at (30,0);
\coordinate(x44) at (28,0);
\coordinate(x45) at (26,0);

\coordinate(x46) at (20,6.928);
\coordinate(x47) at (21,8.66);
\coordinate(x48) at (22,10.392);
\coordinate(x49) at (23,12.124);
\coordinate(x50) at (24,10.392);
\coordinate(x51) at (25,8.66);
\coordinate(x52) at (26,6.928);
\coordinate(x53) at (24,6.928);
\coordinate(x54) at (22,6.928);

\coordinate(x55) at (8,13.856);
\coordinate(x56) at (9,15.588);
\coordinate(x57) at (10,17.32);
\coordinate(x58) at (11,19.052);
\coordinate(x59) at (12,17.32);
\coordinate(x60) at (13,15.588);
\coordinate(x61) at (14,13.856);
\coordinate(x62) at (12,13.856);
\coordinate(x63) at (10,13.856);

\coordinate(x64) at (16,13.856);
\coordinate(x65) at (17,15.588);
\coordinate(x66) at (18,17.32);
\coordinate(x67) at (19,19.052);
\coordinate(x68) at (20,17.32);
\coordinate(x69) at (21,15.588);
\coordinate(x70) at (22,13.856);
\coordinate(x71) at (20,13.856);
\coordinate(x72) at (18,13.856);

\coordinate(x73) at (12,20.784);
\coordinate(x74) at (13,22.516);
\coordinate(x75) at (14,24.248);
\coordinate(x76) at (15,25.98);
\coordinate(x77) at (16,24.248);
\coordinate(x78) at (17,22.516);
\coordinate(x79) at (18,20.784);
\coordinate(x80) at (16,20.784);
\coordinate(x81) at (14,20.784);


\draw (x1) -- (x2) -- (x3) -- (x4) -- (x5) -- (x6)--(x7)--(x8)--(x9)--(x1);
\draw (x2) -- (x9);
\draw (x3) -- (x5);
\draw (x6) -- (x8);

\draw (x10) -- (x11) -- (x12) -- (x13) -- (x14) -- (x15)--(x16)--(x17)--(x18)--(x10);
\draw (x11) -- (x18);
\draw (x12) -- (x14);
\draw (x15) -- (x17);

\draw (x19) -- (x20) -- (x21) -- (x22) -- (x23) -- (x24)--(x25)--(x26)--(x27)--(x19);
\draw (x20) -- (x27);
\draw (x21) -- (x23);
\draw (x24) -- (x26);

\draw[dotted, very thick] (x4) -- (x19);
\draw[dotted, very thick] (x25) -- (x13);
\draw[dotted, very thick] (x7) -- (x10);

\draw (x28) -- (x29) -- (x30) -- (x31) -- (x32) -- (x33)--(x34)--(x35)--(x36)--(x28);
\draw (x29) -- (x36);
\draw (x30) -- (x32);
\draw (x33) -- (x35);

\draw (x37) -- (x38) -- (x39) -- (x40) -- (x41) -- (x42)--(x43)--(x44)--(x45)--(x37);
\draw (x38) -- (x45);
\draw (x39) -- (x41);
\draw (x42) -- (x44);

\draw (x46) -- (x47) -- (x48) -- (x49) -- (x50) -- (x51)--(x52)--(x53)--(x54)--(x46);
\draw (x47) -- (x54);
\draw (x48) -- (x50);
\draw (x51) -- (x53);

\draw[dotted, very thick] (x31) -- (x46);
\draw[dotted, very thick] (x52) -- (x40);
\draw[dotted, very thick] (x34) -- (x37);

\draw (x55) -- (x56) -- (x57) -- (x58) -- (x59) -- (x60)--(x61)--(x62)--(x63)--(x55);
\draw (x56) -- (x63);
\draw (x57) -- (x59);
\draw (x60) -- (x62);

\draw (x64) -- (x65) -- (x66) -- (x67) -- (x68) -- (x69)--(x70)--(x71)--(x72)--(x64);
\draw (x65) -- (x72);
\draw (x66) -- (x68);
\draw (x69) -- (x71);

\draw (x73) -- (x74) -- (x75) -- (x76) -- (x77) -- (x78)--(x79)--(x80)--(x81)--(x73);
\draw (x74) -- (x81);
\draw (x75) -- (x77);
\draw (x78) -- (x80);

\draw[dotted, very thick] (x58) -- (x73);
\draw[dotted, very thick] (x79) -- (x67);
\draw[dotted, very thick] (x61) -- (x64);

\draw[dotted, very thick] (x22) -- (x55);
\draw[dotted, very thick] (x70) -- (x49);
\draw[dotted, very thick] (x16) -- (x28);


\draw(x1)[fill=white] circle(\vr) node[below]{{\footnotesize 2222}};
\draw(x2)[fill=white] circle(\vr) node[left]{{\footnotesize 2221}};
\draw(x3)[fill=white] circle(\vr) node[left]{{\footnotesize 2212}};
\draw(x4)[fill=white] circle(\vr) node[left]{{\footnotesize 2211}};
\draw(x5)[fill=white] circle(\vr) node[right]{{\footnotesize 2213}};
\draw(x6)[fill=white] circle(\vr) node[right]{{\footnotesize 2231}};
\draw(x7)[fill=white] circle(\vr) node[below]{{\footnotesize 2233}};
\draw(x8)[fill=white] circle(\vr) node[below]{{\footnotesize 2232}};
\draw(x9)[fill=white] circle(\vr) node[below]{{\footnotesize 2223}};

\draw(x10)[fill=white] circle(\vr) node[below]{{\footnotesize 2322}};
\draw(x11)[fill=white] circle(\vr) node[left]{{\footnotesize 2321}};
\draw(x12)[fill=white] circle(\vr) node[left]{{\footnotesize 2312}};
\draw(x13)[fill=white] circle(\vr) node[right]{{\footnotesize 2311}};
\draw(x14)[fill=white] circle(\vr) node[right]{{\footnotesize 2313}};
\draw(x15)[fill=white] circle(\vr) node[right]{{\footnotesize 2331}};
\draw(x16)[fill=white] circle(\vr) node[below]{{\footnotesize 2333}};
\draw(x17)[fill=white] circle(\vr) node[below]{{\footnotesize 2332}};
\draw(x18)[fill=white] circle(\vr) node[below]{{\footnotesize 2323}};

\draw(x19)[fill=white] circle(\vr) node[left]{{\footnotesize 2122}};
\draw(x20)[fill=white] circle(\vr) node[left]{{\footnotesize 2121}};
\draw(x21)[fill=white] circle(\vr) node[left]{{\footnotesize 2112}};
\draw(x22)[fill=white] circle(\vr) node[left]{{\footnotesize 2111}};
\draw(x23)[fill=white] circle(\vr) node[right]{{\footnotesize 2113}};
\draw(x24)[fill=white] circle(\vr) node[right]{{\footnotesize 2131}};
\draw(x25)[fill=white] circle(\vr) node[right]{{\footnotesize 2133}};
\draw(x26)[fill=white] circle(\vr) node[below]{{\footnotesize 2132}};
\draw(x27)[fill=white] circle(\vr) node[below]{{\footnotesize 2123}};

\draw(x28)[fill=white] circle(\vr) node[below]{{\footnotesize 3222}};
\draw(x29)[fill=white] circle(\vr) node[left]{{\footnotesize 3221}};
\draw(x30)[fill=white] circle(\vr) node[left]{{\footnotesize 3212}};
\draw(x31)[fill=white] circle(\vr) node[left]{{\footnotesize 3211}};
\draw(x32)[fill=white] circle(\vr) node[right]{{\footnotesize 3213}};
\draw(x33)[fill=white] circle(\vr) node[right]{{\footnotesize 3231}};
\draw(x34)[fill=white] circle(\vr) node[below]{{\footnotesize 3233}};
\draw(x35)[fill=white] circle(\vr) node[below]{{\footnotesize 3232}};
\draw(x36)[fill=white] circle(\vr) node[below]{{\footnotesize 3223}};

\draw(x37)[fill=white] circle(\vr) node[below]{{\footnotesize 3322}};
\draw(x38)[fill=white] circle(\vr) node[left]{{\footnotesize 3321}};
\draw(x39)[fill=white] circle(\vr) node[left]{{\footnotesize 3312}};
\draw(x40)[fill=white] circle(\vr) node[right]{{\footnotesize 3311}};
\draw(x41)[fill=white] circle(\vr) node[right]{{\footnotesize 3313}};
\draw(x42)[fill=white] circle(\vr) node[right]{{\footnotesize 3331}};
\draw(x43)[fill=white] circle(\vr) node[below]{{\footnotesize 3333}};
\draw(x44)[fill=white] circle(\vr) node[below]{{\footnotesize 3332}};
\draw(x45)[fill=white] circle(\vr) node[below]{{\footnotesize 3323}};

\draw(x46)[fill=white] circle(\vr) node[left]{{\footnotesize 3122}};
\draw(x47)[fill=white] circle(\vr) node[left]{{\footnotesize 3121}};
\draw(x48)[fill=white] circle(\vr) node[left]{{\footnotesize 3112}};
\draw(x49)[fill=white] circle(\vr) node[right]{{\footnotesize 3111}};
\draw(x50)[fill=white] circle(\vr) node[right]{{\footnotesize 3113}};
\draw(x51)[fill=white] circle(\vr) node[right]{{\footnotesize 3131}};
\draw(x52)[fill=white] circle(\vr) node[right]{{\footnotesize 3133}};
\draw(x53)[fill=white] circle(\vr) node[below]{{\footnotesize 3132}};
\draw(x54)[fill=white] circle(\vr) node[below]{{\footnotesize 3123}};

\draw(x55)[fill=white] circle(\vr) node[left]{{\footnotesize 1222}};
\draw(x56)[fill=white] circle(\vr) node[left]{{\footnotesize 1221}};
\draw(x57)[fill=white] circle(\vr) node[left]{{\footnotesize 1212}};
\draw(x58)[fill=white] circle(\vr) node[left]{{\footnotesize 1211}};
\draw(x59)[fill=white] circle(\vr) node[right]{{\footnotesize 1213}};
\draw(x60)[fill=white] circle(\vr) node[right]{{\footnotesize 1231}};
\draw(x61)[fill=white] circle(\vr) node[below]{{\footnotesize 1233}};
\draw(x62)[fill=white] circle(\vr) node[below]{{\footnotesize 1232}};
\draw(x63)[fill=white] circle(\vr) node[below]{{\footnotesize 1223}};

\draw(x64)[fill=white] circle(\vr) node[below]{{\footnotesize 1322}};
\draw(x65)[fill=white] circle(\vr) node[left]{{\footnotesize 1321}};
\draw(x66)[fill=white] circle(\vr) node[left]{{\footnotesize 1312}};
\draw(x67)[fill=white] circle(\vr) node[right]{{\footnotesize 1311}};
\draw(x68)[fill=white] circle(\vr) node[right]{{\footnotesize 1313}};
\draw(x69)[fill=white] circle(\vr) node[right]{{\footnotesize 1331}};
\draw(x70)[fill=white] circle(\vr) node[right]{{\footnotesize 1333}};
\draw(x71)[fill=white] circle(\vr) node[below]{{\footnotesize 1332}};
\draw(x72)[fill=white] circle(\vr) node[below]{{\footnotesize 1323}};

\draw(x73)[fill=white] circle(\vr) node[left]{{\footnotesize 1122}};
\draw(x74)[fill=white] circle(\vr) node[left]{{\footnotesize 1121}};
\draw(x75)[fill=white] circle(\vr) node[left]{{\footnotesize 1112}};
\draw(x76)[fill=white] circle(\vr) node[above]{{\footnotesize 1111}};
\draw(x77)[fill=white] circle(\vr) node[right]{{\footnotesize 1113}};
\draw(x78)[fill=white] circle(\vr) node[right]{{\footnotesize 1131}};
\draw(x79)[fill=white] circle(\vr) node[right]{{\footnotesize 1133}};
\draw(x80)[fill=white] circle(\vr) node[below]{{\footnotesize 1132}};
\draw(x81)[fill=white] circle(\vr) node[below]{{\footnotesize 1123}};

\end{scope}
\end{tikzpicture}
\caption{The Sierpi\'nski graph $S_3^4$. The dotted edges are the connecting edges between the copies of $S_3^2$ in it.}
\label{fig:S_3^4}
\end{center}
\end{figure}

\begin{theorem}
    \label{thm:mainDom_game}
    If $p\geq 3$ and $n\ge 2$, then 
    $(2p-3)p^{n-2} \leq \dstart(S_p^n), \sstart(S_p^n) \leq (2p-2)p^{n-2}.$    
\end{theorem}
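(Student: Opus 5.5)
Both bounds come from splitting $S_p^n$ into its $p^{n-2}$ copies $\underline{s}S_p^2$, $s\in[p]^{n-2}$. Distinct copies are joined only by single edges between their extreme vertices; each such edge is a \emph{bridge edge}. For $n=2$ the upper bound follows from the proof of Lemma~\ref{lem:X_p}(i), since Dominator's strategy there works on $S_p^2$ without the pendant vertices. The lower bound for $n=2$ follows from Lemma~\ref{lem:S_p^2|Ex} and the Continuation Principle. So from now on assume $n\ge 3$.

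\textbf{Upper bound.} Apply the Cutting Lemma (Theorem~\ref{thm:cutting-lemma}) once for every bridge edge. Each cut replaces the edge by two pendant, already dominated vertices. The result is a disjoint union of $p^{n-2}$ partially dominated graphs, each a copy of $\Gamma_p|X_p$ or $\Gamma_p^-|X_p^-$; the latter occurs for the copies that contain an extreme vertex of $S_p^n$. Each application of the lemma can only increase $\dstart$ and $\sstart$.

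On this disjoint union Dominator plays a component-by-component strategy. He always answers in the component where Staller just moved, using the strategy of Lemma~\ref{lem:X_p}. If that component is finished, he moves in a fresh component, treated as a D-game there. From the point of view of any single component, Staller has at most skipped some moves, so the ``Moreover'' part of Lemma~\ref{lem:X_p} caps each component at $2p-2$ moves. Summing over components gives at most $(2p-2)p^{n-2}$ moves for both the D-game and the S-game.

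The main bookkeeping obstacle is justifying that this interleaving really is covered by the skip clause. In particular, a component may be opened by Dominator or by Staller. The S-game version of Lemma~\ref{lem:X_p}, together with its skip clause, should handle the case where Staller opens a component.

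\textbf{Lower bound.} Staller uses the symmetric idea with Lemma~\ref{lem:S_p^2|Ex}. Since we want a lower bound, we cannot cut edges. Instead we enlarge the set of dominated vertices: declare every extreme vertex of every copy $\underline{s}S_p^2$ dominated from the start. By the Continuation Principle this can only decrease the number of moves, so a lower bound for the new game also holds for $S_p^n$.

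There is a subtlety here. Moves at extreme vertices still dominate the neighbouring extreme vertex in another copy, but that vertex is already declared dominated. So the game on $S_p^n|\bigcup_s {\rm Ex}(\underline{s}S_p^2)$ behaves like a disjoint union of copies of $S_p^2|{\rm Ex}(S_p^2)$, at least as far as which vertices become newly dominated. Legality is also local, because any vertex outside the copies' extreme sets is dominated only from inside its own copy.

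Staller then answers in the copy where Dominator just moved, following the strategy of Lemma~\ref{lem:S_p^2|Ex}. If she must play elsewhere, she starts a fresh copy as in the S-game part of that lemma. Within each copy, Dominator's moves elsewhere look like skipped moves, so the ``Moreover'' clause guarantees at least $2p-3$ moves per copy. This yields $(2p-3)p^{n-2}$ moves for both the D-game and the S-game.

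The hardest step is making the disjoint-union reduction rigorous. Dominator may try to exploit the order in which copies are opened, or force Staller to move in a copy that Dominator has not yet entered. Both patterns need to be covered by the lemma's skip clause or its S-game case, and I would verify this carefully.
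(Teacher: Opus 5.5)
Your proposal is correct and follows the paper's proof. For the upper bound you use the Cutting Lemma on all edges between copies of $S_p^2$, a component-wise Dominator strategy, and the skip clause of Lemma~\ref{lem:X_p}; for the lower bound you use a respond-in-the-same-copy Staller strategy with Lemma~\ref{lem:S_p^2|Ex} and the Continuation Principle. Your explicit preliminary step of declaring all extreme vertices of all copies dominated, which makes the game a genuine disjoint union of copies of $S_p^2|{\rm Ex}(S_p^2)$, spells out what the paper leaves implicit in its appeal to the Continuation Principle.
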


\proof
The statement can be easily checked for $n=2$, hence we assume in what follows that $n \geq 3$. First we prove that $\dstart(S_p^n) \leq (2p-2)p^{n-2}$. Fix $n\ge 3$ and $p\ge 3$, and set
$$E_2=\{uv \in E(S_p^n);\ u_1=v_1, \dots, u_{h-1} = v_{h-1}, u_h \ne v_h,  h\leq n-2\}.$$ 
Note that $E_2$ contains the edges that connect different copies of $S_p^2$, see Figure~\ref{fig:S_3^4}, where these edges are dotted in the case $S_3^4$. Let $G_{n,p}$ be the graph obtained from $G=S_p^n$ by inductively using operation $G_{uv}$ for every edge $uv \in E_2$. Note that $G_{n,p}$  consists of $p^{n-2}$ components, $p$ of them being isomorphic to $\Gamma_p^-$ and the rest isomorphic to $\Gamma_p$.
By Theorem~\ref{thm:cutting-lemma} it follows that $\dstart(S_p^n) \leq \dstart(G_{n,p})$. Thus, it remains to show that $\dstart(G_{n,p}) \leq (2p-2)p^{n-2}$. For this sake consider the strategy of Dominator in which he always optimally plays, if possible, in the same component of $G_{n,p}$ in which Staller played the last move. Note that such a move is not possible only if all the vertices of the component in which Staller played last are already dominated. In such a case, provided the game is not over yet, Dominator either optimally plays in a component in which no vertex was played or in a component in which he had a last move. Note that by the strategy of Dominator, Staller will never play two consecutive moves in the same component of $G_{n,p}$. By applying Lemma~\ref{lem:X_p} it follows that at most $2p-2$ moves will be played in each component of $G_{n,p}$. As there are $p^{n-2}$ such components, we can conclude that $\dstart(S_p^n) \leq \dstart(G_{n,p}) \leq (2p-2)p^{n-2}$. An analogous strategy of Dominator yields the required result for the S-game.

Now we prove $\dstart(S_p^n) \geq (2p-3)p^{n-2}$. We partition $V(S_p^n)$ into $p^{n-2}$ sets where each set induces a copy of $S_p^2$. Staller's strategy is the following. She always optimally plays, if possible, in the same partition set in which Dominator played his last move, where optimal play is with respect to the graph $S_p^2|{\rm{Ex}(S_p^2)}$. Note that such a move is not possible only if all the vertices of the partition set in which Dominator played last are already dominated. In such a case, provided the game is not over yet, Staller either optimally plays in a partition set in which no vertex was played or in a partition set in which she had a last move. Note that by the strategy of Staller, Dominator will never play two consecutive moves in the same partition set of $S_p^n$. By applying Lemma~\ref{lem:S_p^2|Ex} and by the Continuation Principle it follows that at least $2p-3$ moves will be played in each of the partition sets. Thus $\dstart(S_p^n) \geq (2p-3)p^{n-2}$. An analogous strategy of Staller yields $\sstart(S_p^n) \geq (2p-3)p^{n-2}$.
\qed

\section{Total domination game}
\label{sec:total-dom-game}

In this section, we study the total domination game on Sierpi\'{n}ski graphs. We will make use of the Total Continuation Principle for the total domination game, whose formulation is identical to that for the Continuation Principle, with $\dstart(G)$ and $\sstart(G)$ respectively replaced by $\tdstart(G)$ and $\tsstart(G)$.

As with the domination number, determining the total domination number is 
computationally hard for general graphs~\cite{GJ-79}. For Sierpi\'{n}ski 
graphs, however, this parameter can be determined efficiently. In~\cite{GKM-2013} 
it was shown that
\begin{equation}
\label{eq:total}  
\gammat(S_p^n)=
\begin{cases}
p^{n-1}; & \text{if } p \text{ is even},\\
p^{n-1}+1; & \text{if } p \text{ is odd}.
\end{cases}
\end{equation}
On the other hand, for the Grundy total domination number, which corresponds to the maximum length of a legal total dominating sequence, it was shown in~\cite[Corollary 3.59]{kos-2019} that 
\begin{equation}
\label{eq:grtotal}  
\grundyt(S_p^n) \geq 
p^{n-1}+\frac{p(p^{n-1}-1)}{2}.
\end{equation}
Next we consider the game total domination number on Sierpi\'{n}ski graphs and show that it strictly lies between the above two extreme options, in which either just Dominator (in~\eqref{eq:total}) or just Staller (in~\eqref{eq:grtotal}) is playing the game.

The relation $\gamma_{{\rm{tg}}}(G) \leq 2\gammat(G) -1$ from~\cite[Theorem 3.1]{hkr-2015} applied to Sierpi\'{n}ski graphs yields:
\[
\tdstart(S_p^n)\leq 
\begin{cases}
2p^{n-1}-1; & \text{if } p \text{ is even},\\
2p^{n-1}+1; & \text{if } p \text{ is odd}.
\end{cases}
\]

Next, we improve a trivial lower bound $\tdstart(S_p^n) \geq \gammat(S_p^n)$ as follows. We start with a partial result that will be needed in the proof.

\begin{lemma}
\label{lem:S_p^2|Ex_total}
    If $p\geq 3$, then $\tdstart\left(S_p^2|{\rm Ex}(S_p^2)\right) \geq 2p-2$ and $\tsstart\left({S_p^2|\rm Ex}(S_p^2)\right) \geq 2p-2$.
 Moreover, if Dominator decides to skip a move, the corresponding game will last at least $2p-2$ moves.
\end{lemma}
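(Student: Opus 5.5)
The plan is to adapt the strategy from the proof of Lemma~\ref{lem:S_p^2|Ex}: Staller answers every move of Dominator inside the same copy $iS_p^1$. First I will record the local structure of $S_p^2|{\rm Ex}(S_p^2)$ for total domination. The vertices still to be totally dominated are the $p(p-1)$ vertices $ij$ with $i\neq j$. A vertex $ij$ with $i\ne j$ has open neighbourhood $\{ik:\ k\ne j\}\cup\{ji\}$, and the extreme vertex $ii$ has open neighbourhood $\{ik:\ k\ne i\}$. Consequently a non-extreme vertex $ij$ can be totally dominated only by a played vertex of $iS_p^1$ other than $ij$ itself, or by its bridge partner $ji$. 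In particular, if neither copy $iS_p^1$ nor copy $jS_p^1$ contains a played vertex, then $ij$ is not yet totally dominated.

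Call a copy \emph{touched} if some vertex of it has been played. By the observation above, the game cannot be over while two copies are untouched, since the vertex $ij$ of one of them and $ji$ of the other are both undominated. Hence at the end at least $p-1$ copies are touched.

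Staller's strategy is as follows. Whenever Dominator plays in an as yet untouched copy $iS_p^1$, she replies with a legal move in the same copy. If Dominator played $ij$ with $j\ne i$, then $ij$ is not totally dominated by its own move, so she plays some $ik$ with $k\neq j$; this is legal as long as $ij$ is still undominated, i.e.\ $ji$ has not been played. Otherwise (including when Dominator played $ii$) she plays a vertex $ik$ whose bridge partner $ki$ is still undominated, or any other legal vertex of the copy. By the Total Continuation Principle we may assume that Dominator never plays a second vertex in a copy he has already touched before all copies are touched, exactly as in the proof of Lemma~\ref{lem:S_p^2|Ex}. Under this strategy every touched copy eventually receives at least two moves, except possibly the copy of Dominator's final move, which may remain unanswered.

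This gives the count: with $t\ge p-1$ touched copies we get at least $2t-1$ moves. If $t=p$, this is already at least $2p-1$. The critical case is $t=p-1$ with Dominator's last move unanswered, which gives only $2p-3$. Here I will use the extra information that the untouched copy $iS_p^1$ must have all of its $p-1$ non-extreme vertices dominated via bridges. This forces all $p-1$ vertices $ji$, one in every touched copy, to have been played. Staller's reply rule (preferring in copy $j$ a vertex other than $ji$ when possible, or forcing a third move in some copy) should then yield one more move, giving $2p-2$.

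For the S-game, Staller starts with an arbitrary vertex, say $11$ (justified by the Total Continuation Principle), and then follows the same strategy. The case where Dominator skips a move is handled as in Lemma~\ref{lem:S_p^2|Ex}: Staller plays in an already touched copy if she can, and otherwise touches a new copy. The main obstacle I expect is this final counting step. The extreme case $t=p-1$, together with the legality of Staller's replies when bridge partners have already been dominated, requires careful case analysis, and this is where the bound $2p-2$ rather than $2p-3$ must be earned.
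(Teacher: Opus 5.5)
There is a genuine gap: the proof stops exactly where the bound has to be earned. In the case $t=p-1$ you only say that Staller's reply rule ``should then yield one more move''. The rule you sketch (prefer in copy $j$ a vertex other than $ji$, although the eventually untouched index $i$ is not known during play) is not what closes it.

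\emph{The missing step.} It is a structural fact that does not depend on anyone's strategy. Suppose the copy $i_0S_p^1$ contains no played vertex at the end, and take any $j\neq i_0$.
\begin{enumerate}
\item The vertex $i_0j$ can only be totally dominated by $ji_0$, so $ji_0$ is played.
\item The vertex $ji_0$ must itself be totally dominated. Its neighbours are the vertices $jl$ with $l\neq i_0$, together with $i_0j$, which lies in the untouched copy.
\item Hence a second vertex $jl\neq ji_0$ of copy $j$ is played.
\end{enumerate}
So every touched copy contains two played vertices, and there are at least $2(p-1)$ moves, whoever played them. In particular, the configuration you fear (Dominator's final move being the lone vertex of the $(p-1)$st touched copy) cannot occur. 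Staller's strategy therefore only matters when all $p$ copies get touched.

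\emph{The case $t=p$ also needs repair.}
\begin{itemize}
\item Your reply is not always available as stated. If Dominator enters a fresh copy $iS_p^1$ with $ii$, or with $ij$ where $ji$ is already played, the copy becomes fully totally dominated. A reply $ik$ is then legal only if $ki$ is still undominated, so ``any other legal vertex of the copy'' may not exist.
\item The reply does exist whenever some other copy $k$ is still untouched, since then $ik$ newly dominates $ki$. This gives the invariant that at Dominator's turn every touched copy is fully totally dominated.
\item That invariant is also what justifies your Continuation Principle reduction. A move $il$ in a touched copy then newly dominates at most $li$, and is beaten by $ll$, which lies in an untouched copy.
\item So in the D-game at most the last-touched copy is a singleton, and you get $2p-1$ moves.
\item In the S-game your count $2t-1$ is false, because the copy of Staller's opening $11$ never receives a second move from your rule. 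The rule alone gives only $2t-2$. That is $2p-2$ for $t=p$, but only $2p-4$ for $t=p-1$, where you again need the structural fact (it forces $1i_0$ to be played as well).
\end{itemize}

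For comparison, the paper counts totally dominated vertices rather than copies. Every move newly totally dominates at most $p-1$ vertices, while Staller's answer $ii$ in Dominator's copy dominates exactly one. So after $2p-3$ moves at most $(p-1)^2+(p-2)+p=p^2-1$ vertices are totally dominated, and the game must continue.
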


\proof
Let us start with the D-game played on $S_p^2|{\rm Ex}(S_p^2)$. Staller plays using the following strategy. If Dominator plays in $iS_p^1$ for some $i\in [p]$, then Staller selects $ii$, if possible, otherwise she selects an arbitrary vertex from $V(iS_p^1)\setminus \{ii\}$. Note that by this strategy she totally dominates exactly one new vertex in each of her moves. Consider now the first $p-1$ moves of Dominator. By the Total Continuation Principle and Staller's strategy without loss of generality we may assume that these $p-1$ vertices belong to $iS_p^1$, where $i\in [p-1]$, and each move of Dominator can totally dominate at most $p-1$ vertices previously not totally dominated. After $2p-3$ moves, and including the $p$ pre-totally dominated vertices, at most $(p-1)^2 + (p-2) + p = p^2 - 1$ vertices have been totally dominated. At least one vertex is not yet totally dominated and hence Staller is able to play at least one more move. Thus $\tdstart\left(S_p^2|{\rm Ex}(S_p^2)\right) \geq 2p-2$.

In the S-game, let Staller, by her strategy, first play vertex $11$ and thus she totally dominates $p-1$ vertices that were not previously totally dominated. Her strategy for the next $p-3$ moves is as above, that is whenever Dominator selects a vertex from $iS_p^1$, where $i\neq 1$ by the Total Continuation Principle, she replies by an arbitrary playable vertex from $iS_p^1$. In this way,
after $2p-5$ moves there are at least two indices $i,j \in [p]\setminus \{1\}$, such that no vertex from $iS_p^1$ and $jS_p^1$ was played until this stage of the game. By the Total Continuation Principle, Dominator in his next move chooses a vertex from $V(iS_p^1) \cup V(jS_p^1)$, say a vertex from $iS_p^1$. Hence after $2p-4$ moves, at most $(p-1)+(p-3)+(p-1)(p-2) +p = p^2 -2$ vertices are totally dominated.  In particular, the vertices $j1$ and either $ji$ or $ij$ are not yet totally dominated. With Staller's next move, she plays vertex $1j$.  Since this leaves either $ji$ or $ij$ not yet totally dominated, the game continues.  This implies that $\tsstart\left(S_p^2|{\rm Ex}(S_p^2)\right) \geq 2p-2$.

For the case of the game where Dominator decides to skip a move, Staller keeps her above described strategy to reply to each move of Dominator in the same copy of $iS_p^1$. Whenever Dominator decides to skip a move Staller plays (if possible) inside those copies of $iS_p^1$ in which a vertex has already been played. If no such legal move exists, then she will play the extreme vertex of a not yet played $iS_p^1$. By this strategy she can guarantee that at least $2p-2$ vertices are played. 
\qed

\begin{theorem}\label{thm:mainTotal}
    If $p \geq 3$ and $n\geq 2$, then $\tdstart(S_p^n),\tsstart(S_p^n) \geq (2p-2)p^{n-2}$.
\end{theorem}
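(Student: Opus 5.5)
We follow the lower-bound half of the proof of Theorem~\ref{thm:mainDom_game}, with Lemma~\ref{lem:S_p^2|Ex_total} in place of Lemma~\ref{lem:S_p^2|Ex}. For $n=2$ the statement is exactly Lemma~\ref{lem:S_p^2|Ex_total} with no pre-dominated vertices. By the Total Continuation Principle, removing the pre-dominated set ${\rm Ex}(S_p^2)$ cannot decrease the number of moves, so both bounds $\tdstart(S_p^2),\tsstart(S_p^2)\ge 2p-2$ hold. From now on let $n\ge 3$.

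Partition $V(S_p^n)$ into the $p^{n-2}$ sets $V(\underline{s}S_p^2)$, $s\in[p]^{n-2}$. Each induces a copy of $S_p^2$. The only edges leaving such a copy are the connecting edges incident with its extreme vertices, and each extreme vertex has at most one such edge. Consequently, a move outside a copy can totally dominate, inside that copy, only its extreme vertices. A move inside the copy totally dominates outside it at most one neighbour of an extreme vertex.

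Staller's strategy is as follows. She imagines, in every copy $\underline{s}S_p^2$, a separate game on $S_p^2|{\rm Ex}(S_p^2)$. She always answers in the copy where Dominator just moved, following her strategy from Lemma~\ref{lem:S_p^2|Ex_total} for that local game. If no legal move exists there, she moves in a fresh copy or in a copy where she moved last, again according to the lemma. Because she always replies in Dominator's copy, Dominator never makes two consecutive moves in one copy without a Staller move between them. From the local point of view, then, each local game is either a D-game or an S-game in which Dominator may skip moves, which is exactly the situation covered by the \emph{moreover} part of Lemma~\ref{lem:S_p^2|Ex_total}.

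The key, and most delicate, step is to verify that the real game restricted to a copy is dominated by the imagined local game. In the real game, moves outside the copy can only pre-dominate its extreme vertices, and the local game already regards those as dominated. A real move in the copy that is legal in $S_p^n$ might in principle be illegal in the local game if it only totally dominates a vertex outside the copy. In that case we treat the move as a skip in the local game, and the Total Continuation Principle, used as in Lemma~\ref{lem:S_p^2|Ex_total}, absorbs it.

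Conversely, we must check that Staller's prescribed local move is legal in the real game. It totally dominates a non-extreme vertex, or a vertex whose status is unaffected by outside play, so it is legal globally as well. This compatibility check is where I expect the main care to be needed, in particular ensuring that Staller's replies of the form $ii$ in the local strategy are still new moves. Once it is done, Lemma~\ref{lem:S_p^2|Ex_total} gives at least $2p-2$ moves in each of the $p^{n-2}$ copies, so $\tdstart(S_p^n)\ge(2p-2)p^{n-2}$. The same strategy applies verbatim to the S-game, giving $\tsstart(S_p^n)\ge(2p-2)p^{n-2}$.
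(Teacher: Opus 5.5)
Your proposal is correct and follows the paper's proof essentially step for step. Both handle the case $n=2$ via the Total Continuation Principle and Lemma~\ref{lem:S_p^2|Ex_total}. For $n\ge 3$, both partition $V(S_p^n)$ into $p^{n-2}$ copies of $S_p^2$ and have Staller answer in Dominator's copy, so that each local game is a D- or S-game in which only Dominator may skip.

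Your explicit compatibility check is a more careful version of what the paper compresses into ``by the Total Continuation Principle.'' The real set of totally dominated vertices inside a copy always stays contained in the imagined local one. Hence Staller's locally legal replies are globally legal, and Dominator's locally illegal moves are counted as skips. Note that such moves include not only those that newly dominate an outside vertex but also those that newly dominate only an extreme vertex of the copy.
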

\proof
Consider first the case $n=2$. Then the Total Continuation Principle implies that $\tdstart(S_p^2) \geq \tdstart\left(S_p^2|{\rm{Ex}}(S_p^2)\right)$, as well as $\tsstart(S_p^2) \geq \tsstart(S_p^2|{\rm{Ex}}(S_p^2))$. Furthermore, $\tdstart\left(S_p^2|{\rm{Ex}}(S_p^2)\right) \geq 2p-2$ and $\tsstart(S_p^2|{\rm{Ex}}(S_p^2)) \geq 2p-2$  by  Lemma~\ref{lem:S_p^2|Ex_total}, which finishes the case $n=2$. Thus we may now assume that $n \geq 3$.

We partition $V(S_p^n)$ into $p^{n-2}$ sets where each set induces a copy of $S_p^2$. Staller's strategy is the following. She always optimally plays, if possible, in the same partition set in which Dominator played his last move, where optimal play is with respect to the graph $S_p^2|{\rm{Ex}(S_p^2)}$. Note that such a move is not possible only if all the vertices of the partition set in which Dominator played last are already dominated. In such a case, provided the game is not over yet, Staller either optimally plays in a partition set in which no vertex was played or in a partition set in which she had the last move. Note that by the strategy of Staller, Dominator will never play two consecutive moves in the same partition set of $S_p^n$. By applying Lemma~\ref{lem:S_p^2|Ex_total} and the Total Continuation Principle it follows that at least $2p-2$ moves will be played in each of the partition sets. Thus $\tdstart(S_p^n) \geq (2p-2)p^{n-2}$. An analogous strategy of Staller yields $\tsstart(S_p^n) \geq (2p-2)p^{n-2}$.
\qed

The three parameters therefore exhibit three different scales of behaviour. The total domination number is equal to $p^{n-1}$, while our results show that
\[
\left(2-\frac{2}{p}\right)p^{n-1}
\leq
\tdstart(S_p^n)
\leq
2p^{n-1}-1.
\]
Thus, the game total domination number is confined to a narrow interval around $2p^{n-1}$. In contrast, the known lower bound for the Grundy total domination number yields
\[
\grundyt(S_p^n)
\geq
\left(1+\frac{p}{2}\right)p^{n-1}-\frac{p}{2}.
\]
Consequently, the parameters $\gammat$, $\tdstart$, and $\grundyt$ can be roughly viewed as being of sizes
$$p^{n-1}, 2p^{n-1},\left(1+\frac{p}{2}\right)p^{n-1},$$
respectively. In particular, whereas the total domination game increases the size of a minimum total dominating sequence by at most a factor of $2$, the Grundy total domination number may be larger by a factor that grows linearly with $p$.

\section*{Acknowledgements}

Tanja Dravec and Sandi Klav\v zar were supported by the Slovenian Research and Innovation Agency (ARIS) under the grants P1-0297, N1-0285, N1-0355, N1-0431, J1-70045.  Daniel P. Johnston would like to thank the financial support provided by an AMS-Simons Research Enhancement Grant for Primarily Undergraduate Institution Faculty.

\begin{thebibliography}{99}

\bibitem{att-2026}
F.~Attarzadeh, A.~Abbasi, A.~Behtoei, 
Girth and planarity of the generalized {Sierpi{\'n}ski} gasket {{\(S[G, t]\)}},
J. Algebra Relat. Top. 14 (2026) 125--137.

\bibitem{bgk-2016} 
B.\ Bre\v sar, T.\ Gologranc, T.\ Kos, 
Dominating sequences under atomic changes with applications in Sierpi\'{n}ski and interval graphs, 
Appl.\ Anal.\ Discrete Math.\ 10 (2016) 518--531.

\bibitem{bgmrr-2014}
B.~Bre{\v{s}}ar, T.~Gologranc, M.~Milani\v c, D.~F.~Rall, R.~Rizzi,
Dominating sequences in graphs,
Discrete Math. 336 (2014) 22--36.

\bibitem{book-2021} 
B.\ Bre\v{s}ar, M.A.~Henning, S.\ Klav\v zar, D.F.\ Rall, 
Domination Games Played on Graphs,
Springer, Cham (2021).

\bibitem{bresar-2016} 
B.\ Bre\v sar, M.A.~Henning, D.F.~Rall, 
Total dominating sequences in graphs,
Discrete Math. 339 (2016) 1665--1676.

\bibitem{bresar-2010} 
B.\ Bre\v{s}ar, S.\ Klav\v zar, D.F.\ Rall, 
Domination game and an imagination strategy, 
SIAM J.\ Discrete Math.\ 24 (2010) 979--991.

\bibitem{brito-2026}
J.M.~Brito, T.~Marcilon, N.A.~Martins, R.~Sampaio, 
The normal domination game in graphs,
J. Comput. Syst. Sci. 157 (2026) Paper 103751.

\bibitem{bujtas-2022}
Cs.~Bujt\'{a}s, V.~Ir\v{s}i\v{c}, S.~Klav\v{z}ar, 
$1/2$-conjectures on the domination game and claw-free graphs,
European J.\ Combin.\ 101 (2022) Paper 103467.

\bibitem{dorbec_cutting_2019} 
P.\ Dorbec, M.A.\ Henning, S.\ Klav\v zar, G.\ Ko\v smrlj, Cutting lemma and union lemma for the domination game,
Discrete Math.\ 342 (2019) 1213--1222.

\bibitem{forcan-2022}
J.~Forcan, M.~Mikala\v{c}ki, 
Maker-Breaker total domination game on cubic graphs,
Discrete Math.\ Theor.\ Comput.\ Sci.\ 24 (2022) Paper 20.

\bibitem{fuhrer-2026+}
J.~F\"{u}hrer, G.~Grasegger, P.~Hametner, O.~Roche-Newton,
The minimum degree question for the {M}aker-{B}reaker domination game,
\url{arXiv:2606.04824}  [math.CO] (2026).

\bibitem{GJ-79} 
M.R.\ Garey, D.S.\ Johnson, 
Computers and Intractability: A Guide to the Theory of NP-Completeness, 
Freeman, New York, 1979.

\bibitem{GKM-2013} 
S.~Gravier, M.~Kov\v se, M.~Mollard, J.~Moncel, A.~Parreau,
New results on variants of covering codes in Sierpi\'{n}ski graphs, 
Des.\ Codes Cryptogr.\ 69 (2013) 181--188.

\bibitem{hkr-2015}
M.A.~Henning, S.~Klav{\v{z}}ar, D.F.~Rall, 
Total version of the domination game,
Graphs Combin.\ 31 (2015) 1453--1462.

\bibitem{henning-2017}
M.A.~Henning, S.~Klav\v{z}ar, D.F.~Rall,
The $4/5$ upper bound on the game total domination number,
Combinatorica 37 (2017) 223--251.

\bibitem{hs-2022}
A.M.~Hinz, P.K.~Stockmeyer, 
Precious metal sequences and {Sierpi{\'n}ski}-type graphs,
J. Integer Seq. 25 (2022) Paper 22.4.8.

\bibitem{irsic-2025}
V.~Ir\v{s}i\v{c} Chenoweth, 
Complexity of the game connected domination problem, 
Theor. Comput. Sci. 1057 (2025) Paper 115559.

\bibitem{james-2023}
T.~James, A.~Vijayakumar, 
Domination game: effect of edge contraction and edge subdivision,
Discuss.\ Math.\ Graph Theory 43 (2023) 313--329.

\bibitem{jos-2025}
P.L.~Joshwa, S.~Rajan, T.M.~Rajalaxmi, I.N.~Cangul, 
Embedding of extended {Sierpinski} networks {{\(S^{++}(k, m)\)}} into certain trees,
RAIRO Oper. Res. (2025) 2279--2301.

\bibitem{kinnersley-2013} 
W.B.~Kinnersley, D.B.~West, R.~Zamani, 
Extremal problems for game domination number, 
SIAM J.\ Discrete Math. 27 (2013) 2090--2107.

\bibitem{kmz-2017}
S.~Klav\v{z}ar, A.M.~Hinz, S.S.~Zemlji\v{c}, 
A survey and classification of Sierpi\'{n}ski-type graphs, Discrete Appl.\ Math. 217 (2017) 565--600.

\bibitem{km-1997} 
S.~Klav\v{z}ar, U.~Milutinovi\'c, 
Graphs $S(n, k)$ and a variant of the Tower of Hanoi problem,
Czechoslovak Math.\ J.\ 47 (1997) 95--104.

\bibitem{kmp-2002} 
S.~Klav\v{z}ar, U.~Milutinovi\'c, C.~Petr, 
$1$-perfect codes in Sierpi\'nski graphs,
Bull.\ Aust.\ Math.\ Soc.\ 66 (2002) 369--384.

\bibitem{kos-2019}
T.\ Kos, 
Contributions to the Study of Contemporary Domination Invariants of Graphs, 
PhD Thesis, University of Maribor, 2019. \\ \url{https://dk.um.si/Dokument.php?id=137895&lang=slv}

\bibitem{liu-2021}
C.-A.~Liu, 
Roman domination and double {Roman} domination numbers of {Sierpi{\'n}ski} graphs {{\(S(K_n,t)\)}},
Bull. Malays. Math. Sci. Soc. 44 (2021) 4043--4058.

\bibitem{mcs-2023}
M.K.~Menon, M.R.~Chithra, K.S.~Savitha, 
Security in {Sierpi{\'n}ski} graphs,
Discrete Appl.\ Math.\ 328 (2023) 10--15.

\bibitem{portier-2025}
J.~Portier, L.V.~Versteegen, 
A proof of the 3/4-conjecture for the total domination game,
SIAM J.\ Discrete Math.\ 39 (2025) 1--18.

\bibitem{versteegen-2024}
L.~Versteegen, 
A proof of the 3/5-conjecture in the domination game,
European J. Combin. 122 (2024) Paper 104034.

\bibitem{wor-2024}
C.~Worawannotai, K.~Charoensitthichai, 
4-total domination game critical graphs,
Discrete Math. Algorithms Appl. 16 (2024) Paper 2350061.

\bibitem{yang-2025}
C.~Yang, P.~Li, Y.~Mao, E.~Cheng, R.~Klasing, 
Constructing disjoint {Steiner} trees in {Sierpi{\'n}ski} graphs,
Fundam. Inform. 194(1) (2025) Paper 2.

\end{thebibliography}
\end{document}